\documentclass[]{article}
\usepackage{amsmath}
\usepackage{amsfonts}
\usepackage{graphicx}
\usepackage{amsthm}
\usepackage{mathtools}
\usepackage{hyperref}
\usepackage{amssymb}
\usepackage{xcolor}

\theoremstyle{plain}

\newtheorem{thm}{Theorem}

\newtheorem*{thm*}{Theorem}
\newtheorem{cor}{Corollary}[section]

\newtheorem{rem}{Remark}[section]
\title{A note on maximal subgroups of countable groups}
\author{Azer Akhmedov\footnote{Azer Akhmedov, Department of Mathematics,
North Dakota State University,
Fargo, ND, 58102, USA. E-mail: azer.akhmedov@ndsu.edu}}
\date{September 19, 2026}

\begin{document}
	\maketitle
	\begin{abstract}
		We show that for any countable group, the set of maximal subgroups is either at most countable or has cardinality $2^{\aleph _0}$. This answers a question from \cite{GM}.
      
	\end{abstract}

 The Margulis-Soifer theorem is an important dichotomy for linear groups. It states that a finitely generated linear group is either virtually solvable or contains a maximal subgroup of infinite index, and the two cases are mutually exclusive \cite{MS1, MS2, MS3}. Beyond linear groups, this dichotomy has been established for numerous other classes of groups, sometimes with minor modifications; we refer the reader to the beautiful survey \cite{GGS}.
 
 \medskip 

 In the proof of the theorem (see the proof of Theorem 4, \cite{MS3}), there is an interesting moment: for finitely generated non-virtually solvable linear groups, uncountably many infinite-index maximal subgroups are produced (in a finitely generated group, all but at most countably many of maximal subgroups will have infinite index). This raises the interesting question of whether one can claim that we indeed have exactly $2^{\aleph _0}$ maximal subgroups. It is in fact intriguing if the Continuum Hypothesis is lurking behind being fundamentally embedded in the nature of the question or can one avoid it to get a continuum family of maximal subgroups. One reasonable way to achieve the latter is to somehow produce a family explicit enough that it is parametrized by real numbers or by another set of cardinality $2^{\aleph _0}$. In \cite{GM}, this question has been studied  and (along with other remarkable results there) it is shown that $SL(n,\mathbb{Z}), n\geq 3$ has $2^{\aleph _0}$ maximal subgroups. The question for an arbitrary linear group was open (See Question 7.2 in \cite{GM}). We prove the following theorem.

  \medskip

\begin{thm} \label{thm:one}  Let $G$ be a countable group. Then the set of maximal subgroups of $G$ is either at most countable or has cardinality $2^{\aleph _0}$.  Similarly, the set of maximal subgroups of infinite index of $G$ is either at most countable or has cardinality $2^{\aleph _0}$.
\end{thm}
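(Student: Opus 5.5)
The plan is to realize the set of maximal subgroups as a Borel subset of a Polish space and then invoke the perfect set theorem for Borel sets (Alexandrov--Hausdorff): an uncountable Borel subset of a Polish space contains a homeomorphic copy of the Cantor set, and so has cardinality exactly $2^{\aleph_0}$. This avoids the Continuum Hypothesis entirely. Since $G$ is countable, the power set $2^G=\{0,1\}^G$ with the product topology is homeomorphic to the Cantor set, or is finite if $G$ is finite, in which case there is nothing to prove. For each fixed $x\in G$, the condition "$x\in H$" defines a clopen subset of $2^G$. Hence every condition on $H$ built from finitely many membership statements is clopen, and countable Boolean combinations of such conditions are Borel.

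First, the set $\mathrm{Sub}(G)$ of subgroups is closed. It is the countable intersection, over $x,y\in G$, of the clopen conditions "$1\in H$" and "$x,y\in H\Rightarrow xy^{-1}\in H$". Next, "$H\neq G$" is the open condition $\bigcup_{x\in G}\{x\notin H\}$. The key step is to express "$\langle H,g\rangle=G$" in a Borel way. For a fixed $g$, this holds iff for every $x\in G$ there exist $n$, elements $h_1,\dots,h_n\in G$, and exponents $\epsilon_i$, such that all $h_i\in H$ and $x=h_1g^{\epsilon_1}h_2g^{\epsilon_2}\cdots h_ng^{\epsilon_n}$. The ranges of $n$, the $h_i$ and the $\epsilon_i$ are countable, and the condition "all $h_i\in H$" is clopen. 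So for each $x$ this is an open condition, and the intersection over $x$ is $G_\delta$. Therefore
\[
\mathcal{M}=\mathrm{Sub}(G)\cap\{H\neq G\}\cap\bigcap_{g\in G}\Bigl(\{g\in H\}\cup\{\langle H,g\rangle=G\}\Bigr)
\]
is Borel. By construction, $\mathcal{M}$ is exactly the set of maximal subgroups.

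For the infinite-index statement, I intersect $\mathcal{M}$ with the set of $H$ having infinite index. This set can be written as
\[
\bigcap_{n\ge1}\ \bigcup_{g_1,\dots,g_n\in G}\ \bigcap_{i\neq j}\{g_i^{-1}g_j\notin H\},
\]
which says that $H$ has at least $n$ distinct left cosets for every $n$. Each inner set is clopen, so the whole set is Borel, and so is its intersection with $\mathcal{M}$. Applying the perfect set theorem to both Borel sets then yields the two dichotomies. In each case the cardinality is bounded above by $|2^G|=2^{\aleph_0}$.

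The main obstacle is only the bookkeeping in the step describing $\langle H,g\rangle=G$. One has to check that the quantifier over words ranges over a countable set, and that membership of the letters in $H$ is a finite, hence clopen, condition; once that is done, the rest is standard descriptive set theory. I would also state the perfect set theorem for Borel sets explicitly as the cited tool, since the whole argument rests on it.
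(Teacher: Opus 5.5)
Your proposal is correct and follows essentially the same route as the paper: the same embedding of $\mathrm{Sub}(G)$ in $\{0,1\}^G$, the same description $\mathrm{Sub}(G)\cap\{H\neq G\}\cap\bigcap_{g}(\{g\in H\}\cup\{\langle H,g\rangle=G\})$ with words giving a $G_\delta$ condition, the same ``$n$ distinct cosets for every $n$'' description of infinite index, and then a perfect set theorem. The only difference is that you stop at ``Borel'' and invoke the Alexandrov--Hausdorff theorem for Borel sets, whereas the paper notes that both sets are $G_\delta$, hence Polish, so the perfect set property for Polish spaces suffices; your own computations already show they are $G_\delta$.
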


{\bf Proof. } Let $G$ be a countable group and $\mathrm{Sub}(G) = \{H : H\leq G\}$ be the set of subgroups of $G$. We will consider the Chabauty topology on $\mathrm{Sub}(G)$. The quickest way to define this topology is to consider the product topology on $\{0,1\}^{G}$ and identify every subset $S\subseteq G$ with its characteristic function $1_{S}$. Then $\mathrm{Sub}(G)$ is a subset of the topological space $2^G$ and the Chabauty topology is the subspace topology on $\mathrm{Sub}(G)$. For a subgroup $H\leq G$, basic open neighborhoods can be defined as $$U(H; K_{+}, K_{-}) = \{F\leq G: K_{+}\subseteq F, K_{-}\cap F = \emptyset \}$$ where $K_{+}, K_{-}$ are finite subsets of $G$, $K_{+}\subseteq H, K_{-}\subseteq G\backslash H$. Notice that $\mathrm{Sub}(G)$ is a closed subset of the compact metrizable space $\{0,1\}^{G}$, and hence is compact metrizable and Polish.

\medskip 

 For $g\in G$ define $U_g = \{H\in \mathrm{Sub}(G) : g\in H\}$.  Also, for $g, x \in G$, let $V_{g,x} = \{H : x\in \langle H, g\rangle\}$. Notice that $U_g$ is clopen for all $g\in G$. On the other hand, $V_{g,x}$ is open for all $g, x\in G$. To see this, let $H\in \mathrm{Sub}(G)$. For all $h_1, \dots , h_r\in H$, define $U_{h_1, \dots , h_r} = \{H: h_1, \dots , h_r\in H\}$. The set $U_{h_1, \dots , h_r}$ is clopen as an intersection of finitely many clopen subsets. Then we can write $$V_{g,x} = \displaystyle \mathop{\bigcup }_{\substack{h_1, \dots , h_r \\ w(g, h_1, \dots , h_r) = x}}U_{h_1, \dots , h_r}$$ and observe that $V_{g,x}$ is open as a union of open subsets.

\medskip 
  
 Then, since $G$ is countable, $\{H : \langle H, g \rangle = G\} = \displaystyle \mathop{\cap }_{x\in G}V_{g,x}$ is a $G_{\delta }$-set.

 \medskip 

 Let $\mathrm{Max}(G)$ be the set of all maximal subgroups of $G$ viewed as a subspace of $\mathrm{Sub}(G)$, and similarly, $\mathrm{Max}_{\infty }(G)$ be the set of all infinite index maximal subgroups of $G$ viewed as a subspace of $\mathrm{Sub}(G)$. We have $$\mathrm{Max}(G) = (\mathrm{Sub}(G)\backslash \{G\})\cap \displaystyle \mathop{\bigcap }_{g\in G}(U_g\cup \displaystyle \mathop{\bigcap }_{x\in G}V_{g,x}).$$

  Each term of the right-hand side  is a $G_{\delta }$-set, so $\mathrm{Max}(G)$ is a $G_{\delta }$-subset of $\mathrm{Sub}(G)$. Hence $\mathrm{Max}(G)$ is a Polish space.

  \medskip 

   For each $n\geq 0$, $\{H : [G:H]\geq n\}$ is open. Indeed, if $H$ has at least $n$ cosets, choose the representatives $g_1, \dots , g_n$ with $g_i^{-1}g_j\notin H$ for all $i\neq j$. These finitely many non-membership conditions persist in a neighborhood of $H$.  Then $$\{H : [G:H]= \infty \} = \displaystyle \mathop{\bigcap}_{n=1}^{\infty }\{H : [G:H]\geq n\}$$ is $G_{\delta }$. It follows that $$\mathrm{Max}_{\infty }(G) = \mathrm{Max}(G)\cap \{H : [G:H]= \infty \}$$ is also $G_{\delta }$ hence a Polish space. But every uncountable Polish space contains a perfect set and, therefore, has cardinality exactly $2^{\aleph _0}$ \cite{K}.  \ $\square $

   \medskip 

   \begin{rem} Notice that it is possible for a countable group to have uncountably many maximal subgroups, but at most countably many infinite-index maximal subgroups. A good example is the group $G = \displaystyle \bigoplus_{n=1}^{\infty }\mathop{\mathbb{Z}/2\mathbb{Z}}$. For this group, we have $|\mathrm{Max}(G)| = 2^{\aleph _0}$ but $|\mathrm{Max}_{\infty }(G)| = 0$.

   \end{rem}

    Combining Theorem \ref{thm:one} with Theorem 4 of \cite{MS3} we obtain the following corollary.  

\medskip 

   \begin{cor} Every finitely generated linear group is either virtually solvable or contains exactly $2^{\aleph _0}$ infinite index maximal subgroups.  
 \end{cor}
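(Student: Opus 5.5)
The statement to prove is the final Corollary. The plan is to combine the Margulis--Soifer dichotomy with Theorem \ref{thm:one}.

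\textbf{Setup.} Let $G$ be a finitely generated linear group that is not virtually solvable. Since $G$ is finitely generated, it is countable, so Theorem \ref{thm:one} applies to $G$. By Theorem \ref{thm:one}, $\mathrm{Max}_{\infty}(G)$ is either at most countable or of cardinality exactly $2^{\aleph_0}$. So it suffices to rule out the first alternative, that is, to show $\mathrm{Max}_{\infty}(G)$ is uncountable.

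\textbf{Step 1: uncountably many maximal subgroups.} I would invoke Theorem 4 of \cite{MS3}. As noted in the introduction, its proof produces uncountably many maximal subgroups of infinite index whenever $G$ is finitely generated, linear and not virtually solvable.

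If one only wants to use that the proof gives uncountably many maximal subgroups, without tracking their index, there is a short supplement. A finitely generated group has only finitely many subgroups of each finite index $n$. This is because such subgroups correspond to transitive actions on $n$ points, and there are at most $(n!)^{d}$ homomorphisms $G\to S_n$ when $G$ has $d$ generators. Hence $G$ has at most countably many finite-index subgroups, and the uncountably many maximal subgroups must include uncountably many of infinite index. Either way, $\mathrm{Max}_{\infty}(G)$ is uncountable.

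\textbf{Step 2: conclude.} The countable alternative in Theorem \ref{thm:one} is excluded, so $|\mathrm{Max}_{\infty}(G)| = 2^{\aleph_0}$.

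\textbf{Mutual exclusivity.} The Margulis--Soifer theorem states that the two cases are mutually exclusive: a virtually solvable finitely generated linear group has no maximal subgroups of infinite index at all. So the dichotomy in the Corollary is genuine.

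\textbf{Main obstacle.} The only delicate point is extracting ``uncountably many'' rather than merely ``at least one'' from \cite{MS3}. The statement of Theorem 4 there asserts existence, and uncountability comes from its proof. I would cite the proof explicitly and add the finite-index counting argument above as a safeguard.
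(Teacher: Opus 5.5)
Your proposal is correct and follows the paper's route: take the uncountably many infinite-index maximal subgroups produced in the proof of Theorem~4 of \cite{MS3}, then use the dichotomy of Theorem~\ref{thm:one} to upgrade ``uncountable'' to exactly $2^{\aleph_0}$, with mutual exclusivity coming from Margulis--Soifer. Your finite-index counting safeguard is the same observation the paper makes parenthetically in its introduction, namely that a finitely generated group has only countably many finite-index subgroups.
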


   \medskip

    {\em Acknowledgement:} I would like to thank Michael Cohen; I was his postdoctoral mentor at North Dakota State University, and during his time at NDSU, he taught me some descriptive set theory and Polish space techniques. I also thank ChatGPT for polishing my Polish space argument.

\end{document}